\newtheorem{theorem}{Theorem}[section]
\newtheorem{lemma}{Lemma}[section]
\newtheorem{cor}{corollary}[section]
\newtheorem{definition}{Definition}[section]
\newtheorem{proof}{Proof}
\newtheorem{remark}{Remark}
\title{The blow-up theorem of a discrete semilinear wave equation}
\author{Keisuke Matsuya}
\date{}
\begin{document}
\maketitle
\section{Introduction}
Consider the Cauchy problem for the semilinear wave equation
\begin{equation}\label{wave}
\begin{cases}
\displaystyle{\frac{\partial^2 u}{\partial t^2} = \Delta u + |u|^p}\ (p>1),\\
u(0,\vec{x})=f(\vec{x}),\\
\displaystyle{\frac{\partial u}{\partial t}(0,\vec{x}) = g(\vec{x})},
\end{cases}
\end{equation}
where $u:=u(t,\vec{x})\ (t\ge0,\ \vec{x}:=(x_1,\cdots,x_d)\in\mathbb{R}^d)$ and $\Delta$ is the $d$-dimensional Laplacian $\Delta := \sum\limits_{k=1}^{d}{\frac{\partial ^2}{\partial x_k^2}}$.
When the initial condition $f(\vec{x}),\ g(\vec{x})$ are continuous and unifomly bounded, there is a smooth solution for $t>0$ and whenever the solution is bounded.
It is well known that the solutions of this problem is not necessarily bounded.
For instance, considering the spatially uniform initial condition, $f(\vec{x}) \equiv 0,\ g(\vec{x})\equiv g>0$, this fact can be understood.
In this case,\ $u(t,\vec{x})=u(t)$ and \eqref{wave} becomes an ordinary differential equation,
\begin{equation}\label{ode1}
\begin{cases}
\displaystyle{\frac{d^2 u}{d t^2}=|u|^p}\\
u(0)=0\\
\displaystyle{\frac{d u}{d t}(0)=g>0}
\end{cases}.
\end{equation}
Because of the initial condition, the solution of \eqref{ode1} is non negative if it is bounded so that $|u|^p=u^p$ is obtained.
Multiplying the both sides by $\frac{du}{dt}$ and integrating 0 to $t$, we get
\begin{equation*}
\displaystyle{\left(\frac{d u}{d t}\right)^2=\frac{2}{p+1}u^{p+1}+g^2}.
\end{equation*}
Owing to $\frac{d^2 u}{d t^2} \ge 0$ and $\frac{d u}{d t}(0)=g>0$,\ $\frac{d u}{d t} \ge 0\ (t \ge 0)$ is derived.
Therefore the differential inequality,
\begin{equation}\label{ineq}
\displaystyle{\frac{d u}{d t}>\sqrt{\frac{2}{p+1}}u^{(p+1)/2}}
\end{equation}
is obtained.
Since there exists positive number $\varepsilon$ such that $u(\varepsilon)>0$,\ the solution of \eqref{ineq} is 
\begin{equation*}
\displaystyle{u(t)>\frac{(\alpha C)^{-1/\alpha}}{\left\{\left(\alpha C\right)^{-1}u(\varepsilon)^{-\alpha}+\varepsilon-t\right\}^{1/\alpha}}\quad(t>\varepsilon)}
\end{equation*}
where $\alpha=(p-1)/2$ and $C=\sqrt{2/(p+1)}$.
Now we see that the right side diverges as $t \to \alpha^{-1}C^{-1}u(\varepsilon)^{-\alpha}+\varepsilon-0$ so that the solution of \eqref{ode1} is not bounded for all $t \ge 0$.
In general, if there exists a finite time $T \in \mathbb{R}_{{}>0}$ and if the solution of \eqref{wave} in $(t,\vec{x})\in[0,T)\times\mathbb{R}^d$ satisfies
\begin{equation*}
\limsup\limits_{t\to T-0}{\| u(t,\cdot)\|_{L^{\infty}}}=\infty,
\end{equation*}
where
\begin{equation*}
\| u(t,\cdot) \|_{L^\infty}:=\sup\limits_{\vec{x}\in\mathbb{R}^d}{|u(t,\vec{x})|},
\end{equation*}
then we say that the solution of \eqref{wave} blows up at time $T$.
If such $T$ does not exist for the solution of \eqref{wave} then we call it a global solution.

The critical exponent $p_{\text{c}}(d):=\frac{d+1+\sqrt{d^2+10d-7}}{2(d-1)}\ (d\ge2)$ which characterises the blow up of the solutions for \eqref{wave} is studied by many researchers \cite{John,Glassey1,Glassey2,Schaeffer,Sideris,Georgiev,Yordanov}.
F. John\cite{John} proved small data blow up with $1<p<p_{\text{c}}(3)$ and small data global existence with $p_{\text{c}}(3)<p$.
R.T. Glassey\cite{Glassey1,Glassey2} proved small data blow up with $1<p<p_{\text{c}}(2)$ and small data global existence with $p_{\text{c}}(2)<p$.
J. Schaeffer\cite{Schaeffer} proved small data blow up with $p=p_{\text{c}}(d)$ where $d=2,3$.
T. Sideris\cite{Sideris} proved small data blow up with $1<p<p_{\text{c}}(d)$ where $d \ge 4$.
V. Georgiev, H. Lindblad and C. Sogge\cite{Georgiev} proved small data global existence with $p_{\text{c}(d)}<p$ where $d \ge 4$.
B. Yordanov and Q.S. Zhang\cite{Yordanov} proved small data blow up with $p=p_{\text{c}(d)}$ where $d \ge 4$.\\
Kato \cite{Kato} proved the following theorem
\begin{theorem}
Let $u$ be a generalized solution of
\begin{equation*}
\displaystyle{\frac{\partial^2 u}{\partial t^2} - \sum^d_{j,k=1}{\frac{\partial}{\partial x_j}a_{jk}(t,\vec{x})\frac{\partial}{\partial x_k}u}-\sum^d_{j=1}{\frac{\partial}{\partial x_j}a_j(t,\vec{x})u}}=f(t,\vec{x},u)\ (t \ge 0,\ \vec{x}\in\mathbb{R}^d)
\end{equation*}
on a time interval $0 \le t < T \le \infty$, which is supported on a forward cone
\begin{equation*}
K_R = \{(t,\vec{x});t \ge 0,\ |\vec{x}| \le t+R\}\ (R>0).
\end{equation*}
Assume that $f$ satisfies
\begin{equation*}
f(t,\vec{x},s) \ge
\begin{cases}
b|s|^{p_0}\ (|s| \le 1),\\
b|s|^p\ (|s| \ge 1),
\end{cases}
\end{equation*}
where $b>0$ and $1 < p \le p_0 = (d+1)/(d-1)$.\\
(If $d=1$,\ $p_0$ may be any number greater than or equal to $p$.)\\
Moreover, assume that, for $w(t)=\int_{\mathbb{R}^d}u(t,\vec{x})d\vec{x}$, either (a) $\frac{d w}{d t}(0) > 0$, or (b) $\frac{d w}{d t}(0) = 0$ and $w(0) = 0$.\\
Then one must have $T < \infty$.
\end{theorem}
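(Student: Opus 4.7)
My strategy is to reduce the PDE to a scalar ordinary differential inequality for the spatial average
\[
w(t) := \int_{\mathbb{R}^d} u(t,\vec{x})\,d\vec{x},
\]
and then invoke a Kato-type blow-up lemma for weighted second-order ODEs.

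First I would integrate the equation over $\mathbb{R}^d$. Because $u(t,\cdot)$ is compactly supported in $\{|\vec{x}|\le t+R\}$, the two divergence expressions $\partial_j(a_{jk}\partial_k u)$ and $\partial_j(a_j u)$ vanish after integration by parts, leaving the basic inequality
\[
w''(t)\;\ge\;\int_{\mathbb{R}^d}f\bigl(t,\vec{x},u(t,\vec{x})\bigr)\,d\vec{x}.
\]

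Second, I would bound the right-hand side from below by a power of $w$. Splitting $\mathbb{R}^d$ into $A_1(t)=\{|u|\le 1\}$ and $A_2(t)=\{|u|>1\}$ and applying the hypothesis on $f$ yields
\[
w''(t)\;\ge\;b\int_{A_1}|u|^{p_0}\,d\vec{x}\;+\;b\int_{A_2}|u|^{p}\,d\vec{x}.
\]
H\"older's inequality on each piece, combined with the volume estimate $|\mathrm{supp}\,u(t,\cdot)|\le C_d(t+R)^d$ that comes from the cone condition, gives
\[
\int_{A_j}|u|\,d\vec{x}\;\le\;\Bigl(\int_{A_j}|u|^{q_j}\,d\vec{x}\Bigr)^{\!1/q_j}\bigl(C_d(t+R)^d\bigr)^{1-1/q_j}
\]
for $(q_1,q_2)=(p_0,p)$. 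Combining these (using $p\le p_0=(d+1)/(d-1)$ to balance the two exponents, and absorbing the small-$|u|$ piece via the trivial bound $\int_{A_1}|u|^{p}\le|\mathrm{supp}\,u|$) one arrives at the weighted differential inequality
\[
w''(t)\;\ge\;c\,(t+R)^{-d(p-1)}\,|w(t)|^{p}.
\]

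Third, I would use hypothesis (a) or (b) to secure positivity of $w$ on some time interval. In case (a) the inequality $w''\ge 0$ immediately yields $w(t)\ge w(0)+w'(0)t>0$ for $t>0$; case (b) reduces to (a) after a short bootstrap, since $w'$ is nondecreasing and becomes strictly positive as soon as $\int f\,d\vec{x}>0$. Finally, I would invoke the Kato ODE blow-up lemma: if $F\in C^{2}$ satisfies $F''(t)\ge A(t+R)^{-q}F(t)^{p}$ with $p>1$, $q\le p+1$, and sufficiently positive initial data, then $F$ must become infinite in finite time. With $q=d(p-1)$, the condition $q\le p+1$ translates to $(d-1)p\le d+1$, i.e.\ $p\le(d+1)/(d-1)=p_{0}$, which is precisely the hypothesis of the theorem. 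Blow-up of $w$ contradicts $T=\infty$ and forces $T<\infty$.

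The main obstacle I expect is the second step: cleanly merging the low-$|u|$ and high-$|u|$ H\"older bounds into a single power-law inequality, and verifying that the matched exponents $p$ and $q=d(p-1)$ satisfy $q\le p+1$ exactly at the endpoint $p=p_{0}$. The critical case $p=p_{0}$ is particularly delicate, since the ODE lemma there operates only marginally and its proof requires a careful iterative (or logarithmic) refinement of the lower bound on $w$.
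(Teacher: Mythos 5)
First, a point of context: the paper does not prove this theorem at all --- it is quoted from Kato \cite{Kato}, and only its corollary is used; the closest thing to an in-paper proof is Section 3, where the same Kato scheme is transplanted to the discrete equation (a Jensen/convexity lemma playing the role of your H\"older step, $T^\tau$ playing the role of the cone-section volume, and the chain \eqref{ineq2}, \eqref{ineq4} and the functional $E^\tau$ replacing the ODE lemma). Your overall route --- average in space, use the cone support to get a weighted differential inequality for $w$, then a blow-up lemma for $w''\ge A(t+R)^{-q}w^p$ with $q\le p+1$ --- is indeed Kato's route, so the plan is aimed correctly.

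There are, however, two genuine gaps. (1) The single inequality $w''\ge c\,(t+R)^{-d(p-1)}|w|^p$ does not follow as you claim when $p<p_0$. On $A_1=\{|u|\le 1\}$ the hypothesis only gives $f\ge b|u|^{p_0}$, and there $|u|^{p_0}\le |u|^{p}$, so the small-$|u|$ region cannot be ``absorbed'' by the bound $\int_{A_1}|u|^p\le|\mathrm{supp}\,u|$: that is an upper bound on mass, whereas you need a lower bound on $\int f$ in terms of $|w|$. What the H\"older/case analysis actually yields is a two-regime inequality of the form $w''\ge c\,\min\bigl\{(t+R)^{-d(p-1)}|w|^{p},\,(t+R)^{-(p_0+1)}|w|^{p_0}\bigr\}$ (note $d(p_0-1)=p_0+1$), and the ODE lemma has to be formulated and proved for this combined form; the two branches coincide only at the endpoint $p=p_0$. (2) The ``Kato ODE blow-up lemma'' you invoke is not an off-the-shelf fact at the critical weight $q=p+1$ with merely ``sufficiently positive initial data''; it is the heart of the theorem. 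One needs the iterative refinement: $w''\ge 0$ and positivity give $w\gtrsim t$, feeding this back gives $w''\gtrsim t^{-1}$ and hence $w\gtrsim t\log t$, then the monotone energy functional $(w')^2-c(t+R)^{-(p+1)}w^{p+1}$ yields a first-order inequality whose coefficient grows like $(\log t)^{(p-1)/2}$, and iterating gives growth faster than any fixed power, producing finite-time blow-up of $w$ and hence $T<\infty$. This is exactly the chain the paper executes discretely in Section 3; until you supply the two-regime reduction and a proof of the critical-case lemma along those lines (and spell out that in case (b) one uses $f\ge 0$, so $w'$ is nondecreasing, together with nontriviality of $u$ to get strict positivity), the argument is a correct outline rather than a proof.
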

From this theorem, we obtain the next corollary.
\begin{cor}\label{cor:1.1}
Let $u$ be the solution of \eqref{wave}.
Assume that $f$ and $g$ in \eqref{wave} satisfy $\text{supp}(f) \bigcup \text{supp}(g) \subset \{\vec{x} \in \mathbb{R}^d;|\vec{x}| \le K \}\ (K>0)$ and $\int_{\mathbb{R}^d}gd{\vec{x}} > 0$.
Moreover, assume $1 < p \le (d+1)/(d-1)\ (d \ge 2)$.\\
(If $d=1$, any assumptions for $p$ but $1<p$ are not needed.)
\\
Then $u$ blows up at some finite time.
\end{cor}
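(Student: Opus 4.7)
The plan is to derive Corollary~\ref{cor:1.1} as a direct application of Kato's theorem to the semilinear wave equation~\eqref{wave}. In the notation of that theorem, I set $a_{jk}(t,\vec{x})=\delta_{jk}$, $a_j(t,\vec{x})=0$, and $f(t,\vec{x},s)=|s|^p$, so that~\eqref{wave} becomes a special case of the equation treated by Kato. The task then reduces to verifying the three remaining hypotheses: the support condition on a forward cone, the pointwise lower bound on the nonlinearity, and the sign condition on the spatial mean $w(t)=\int_{\mathbb{R}^d}u(t,\vec{x})\,d\vec{x}$.

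The lower bound on $f(t,\vec{x},s)=|s|^p$ is immediate with $b=1$: for $|s|\ge 1$ one has $|s|^p\ge|s|^p$ trivially, while for $|s|\le 1$ the hypothesis $p\le p_0=(d+1)/(d-1)$ yields $|s|^p\ge|s|^{p_0}$ (and when $d=1$ any $p_0\ge p$ works, matching the stated exception). The sign condition is equally simple: differentiating under the integral, $\frac{dw}{dt}(0)=\int_{\mathbb{R}^d}g(\vec{x})\,d\vec{x}>0$ by hypothesis, so alternative (a) of Kato's theorem is satisfied regardless of the sign of $\int f\,d\vec{x}$.

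The only non-trivial check is the support condition. I would establish it via the finite propagation speed of the linear operator $\partial_t^2-\Delta$: since $\mathrm{supp}(f)\cup\mathrm{supp}(g)\subset\{|\vec{x}|\le K\}$, the solution to the homogeneous linear Cauchy problem is supported in $\{|\vec{x}|\le t+K\}$, and because the nonlinear source $|u|^p$ vanishes wherever $u$ does, a Duhamel/bootstrap argument propagates this support property to the semilinear solution on its maximal interval of existence. Consequently $u$ is supported in the forward cone $K_R$ with $R=K$.

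With all three hypotheses verified, Kato's theorem yields $T<\infty$, so $u$ blows up in finite time. I expect the main technical point to be making the finite propagation speed argument precise within the class of generalized solutions considered by Kato; the remaining verifications are routine algebraic manipulations.
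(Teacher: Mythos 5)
Your proposal is correct and follows exactly the route the paper intends: the paper states the corollary as a direct consequence of Kato's theorem, and your verifications (identity coefficients, $b=1$ with $|s|^p\ge|s|^{p_0}$ for $|s|\le1$ since $p\le p_0$, condition (a) from $\int_{\mathbb{R}^d}g\,d\vec{x}>0$, and support in the cone $K_R$ with $R=K$ by finite propagation speed) are precisely the routine checks that derivation requires.
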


In numerical computation of \eqref{wave}, one has to discretize it and consider a partial difference equation.
A naive discretization would be to replace the $t$-differential and the Laplacian with central differences such that \eqref{wave} turns into
\begin{equation*}
\displaystyle{\frac{u^{\tau+1}_{\vec{n}}-2u^\tau_{\vec{n}}+u^{\tau-1}_{\vec{n}}}{\delta^2} = \sum\limits_{k=1}^d{\frac{u^\tau_{\vec{n}+\vec{e}_k}-2u^\tau_{\vec{n}}+u^\tau_{\vec{n}-\vec{e}_k}}{\xi^2}}+|u^\tau_{\vec{n}}|^p},
\end{equation*}
where $u(\tau,\vec{n})(=:u^\tau_{\vec{n}}):\mathbb{Z}_{{} \geq 0} \times \mathbb{Z}^d \to \mathbb{R}$, for positive constants $\delta$ and  $\xi$Cand where $\vec{e}_k \in \mathbb{Z}^d$\ is the unit vector whose $k$th component is $1$ and whose other components are $0$.
Putting $\lambda := \delta^2/\xi^2$, we obtain
\begin{equation}\label{dwave0}
u^{\tau+1}_{\vec{n}} = 2d\lambda\hat{M}(u^\tau_{\vec{n}})+(2-2d\lambda)u^\tau_{\vec{n}} - u^{\tau-1}_{\vec{n}} + \delta^2 |u^\tau_{\vec{n}}|^p  \quad (p > 1).
\end{equation}
Here
\begin{equation}\label{def:M}
\hat{M}(V_{\vec{n}}) := \displaystyle{\frac{1}{2d}}\sum\limits_{k=1}^{d}{(V_{\vec{n}+\vec{e}_k}+V_{\vec{n}-\vec{e}_k})}.
\end{equation}
For a spatially uniform initial condition,\ \eqref{dwave0} becomes an ordinary difference equation
\begin{equation*}
u^{\tau+1} = 2u^\tau - u^{\tau-1} + \delta |u^\tau|^p.
\end{equation*}
The above equation is a discretization of \eqref{ode1}, but the features of its solutions are quite different.
In fact, $u^\tau$ will never blow up at finite time steps.
Hence, \eqref{dwave0} does not preserve the global nature of the original semilinear wave equation \eqref{wave}.

In this article, we propose and investigate a discrete analogue of \eqref{wave} which does keep the characteristic of corollary \ref{cor:1.1}.\\
In section 2, we present a partial difference equation with a parameter $p$ whose continuous limit equals to  \eqref{wave}, and state the main theorem which shows that this difference equation has exactly the same properties as \eqref{wave} with respect to $p$.
This theorem is proved in section 3.
\section{Discretization of the semilinear wave equation}
We consider the following initial value problem for the partial difference equation
\begin{equation}\label{dwave}
\displaystyle{u^{\tau+1}_{\vec{n}}+u^{\tau-1}_{\vec{n}}=\frac{4v^\tau_{\vec{n}}}{2-\delta^2v^\tau_{\vec{n}}|v^\tau_{\vec{n}}|^{p-2}}},\quad (\tau \in \mathbb{Z}_{{} > 0},\ \vec{n} \in \mathbb{Z}^d)
\end{equation}
where $p>1$ and $\delta>0$ are parameters and $v^\tau_{\vec{n}}$ is defined by means of $\hat{M}$ \eqref{def:M} as
\begin{equation*}
v^\tau_{\vec{n}} := \hat{M}(u^\tau_{\vec{n}}).
\end{equation*}
If there exists a smooth function $u(t,\vec{x}) \quad (t \in \mathbb{R}_{{} \ge 0},\ \vec{x} \in \mathbb{R}^d)$ that satisfies $u(\tau\delta, \xi \vec{n}=u^\tau_{\vec{n}})$ with $\xi:=\sqrt{d}\delta$, we find
\begin{equation*}
u(t+\delta, \vec{x}) + u(t-\delta, \vec{x})= v(t,\vec{x}) ( 2 + \delta^2v(t,\vec{x})|v(t,\vec{x})|^{p-2}) + O(\delta^4),
\end{equation*}
with
\begin{equation*}
\displaystyle{v(t,\vec{x}) := \frac{1}{2d}\sum_{k=1}^d\left(u(t,\vec{x}+\xi\vec{e}_k)+u(t,\vec{x}-\xi\vec{e}_k)\right)},
\end{equation*}
or
\begin{align*}
\frac{u(t+\delta, \vec{x}) - 2u(t, \vec{x}) + u(t-\delta, \vec{x})}{\delta^2}&= \sum^d_{k=1}\frac{u(t, \vec{x}+\xi\vec{e}_k)-2u(t, \vec{x})+u(t, \vec{x}-\xi\vec{e}_k)}{\xi ^2}\\
&\qquad \qquad + |u(t, \vec{x})|^p + O(\delta^2).
\end{align*}
Taking the limit $\delta \to +0$, we obtain the semilinear wave equation \eqref{wave}
\begin{equation*}
\frac{\partial^2 u}{\partial t^2} = \Delta u + |u|^p.
\end{equation*}
Thus \eqref{dwave} can be regarded as a discrete analogue of \eqref{wave}.

Because of the term $2-\delta^2v^\tau_{\vec{n}}|v^\tau_{\vec{n}}|^{p-2}$, if $v^\tau_{\vec{n}} \to (2\delta^{-2})^{1/(p-1)}$, then $u^{\tau+1}_{\vec{n}} \to +\infty$.
This behaviour may be regarded as an analogue of th blow up of solutions for the semilinear wave equation.
Thus we define a blow up of solution for \eqref{dwave} as follow.
\begin{definition}
Let $u^\tau_{\vec{n}}$ be a solution of \eqref{dwave}.

When there exists $\tau_0 \in \mathbb{Z}_{{}\ge0}$ such that $v^{\tau}_{\vec{n}} \le (2\delta^{-2})^{1/(p-1)}$ for all $\tau < \tau_0$ and $\vec{n} \in \mathbb{Z}^d$, and there exists $\vec{n}_0 \in \mathbb{Z}^d$ such that $v^{\tau_0}_{\vec{n}_0} \ge (2\delta^{-2})^{1/(p-1)}$, then we say that the solution $u^\tau_{\vec{n}}$ blows up at time $\tau_0+1$.
\end{definition}
The example of blow-up solutions for \eqref{dwave} is as follow.
Considering the spatially uniform initial condition $u^0_{\vec{n}} \equiv 0,\ u^1_{\vec{n}} \equiv g>0$,\ $u^\tau_{\vec{n}}=u^\tau$ and \eqref{dwave} becomes an ordinary difference equation,
\begin{equation}\label{ode2}
\begin{cases}
\displaystyle{u^{\tau+1} + u^{\tau-1} = \frac{4u^\tau}{2-\delta^2u^\tau|u^\tau|^{p-2}}}\\
u^0 = 0\\
u^1 = g>0
\end{cases}
.
\end{equation}
This is the discrete analogue of \eqref{ode1}.
One can see that the solution of \eqref{ode2} blows up as follow.

Let the solution of \eqref{ode2} does not blow up at any $\tau$,\\
i.e., $u^\tau<(2\delta^{-2})^{1/(p-1)}\ ({}^\forall\tau\in\mathbb{Z}_{{}\ge0})$, then we get
\begin{equation*}
\displaystyle{u^{\tau+1} - 2u^\tau + u^{\tau-1} = \frac{2\delta^2|u^\tau|^p}{2-\delta^2u^\tau|u^\tau|^{p-2}}>0}.
\end{equation*}
Hence, we obtained a difference inequality $u^{\tau+1}-2u^\tau+u^{\tau-1}>0$.
Solving this inequality with the initial value,\ $u^\tau > g\tau$ is derived.
This inequality means that $u^\tau$ is arbitrarily large for large $\tau\in\mathbb{Z}_{{}>0}$.
This statement contradicts to $u^\tau<(2\delta^{-2})^{1/(p-1)}\ ({}^\forall\tau\in\mathbb{Z}_{{}\ge0})$ so that the solution of \eqref{ode2} blows up at some finite time.

Furthermore, \eqref{dwave} inherits quite similar properties to those of \eqref{wave}. The following theorem is the main result in this article.
\begin{theorem}
Let $u^\tau_{\vec{n}}$ be the solution for \eqref{dwave}.
Assume that 
\begin{enumerate}
\item[(A1)] $\{\vec{n} \in \mathbb{Z}^d;u^j_{\vec{n}} \neq 0\} \subset \{\vec{n} \in \mathbb{Z}^d;\|\vec{n}\| \le K\},\ (j=0,1\ K>0)$
\item[(A2)] $\sum_{\vec{n}}u^1_{\vec{n}}>\sum_{\vec{n}}u^0_{\vec{n}}$,
\end{enumerate}
where $\|\vec{n}\|:=|n_1|+\cdots+|n_d|\ (\vec{n}=(n_1,\cdots,n_d)\in\mathbb{Z}^d)$.\\
Moreover assume $1 < p \le (d+1)/(d-1)\ (d \ge 2)$.\\
(If $d=1$, any assumptions for $p$ but $1<p$ are not needed.)\\
Then $u^\tau_{\vec{n}}$ blows up at some finite time.
\end{theorem}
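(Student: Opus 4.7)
The plan is to adapt Kato's argument to the discrete setting via the total mass
\[
w^\tau:=\sum_{\vec n\in\mathbb{Z}^d}u^\tau_{\vec n},
\]
the natural analogue of $w(t)=\int u\,d\vec x$. I argue by contradiction and assume that $|v^\tau_{\vec n}|<(2\delta^{-2})^{1/(p-1)}$ holds for every $\tau$ and $\vec n$ (i.e.\ no blow-up ever occurs). Subtracting $2v^\tau_{\vec n}$ from both sides of \eqref{dwave} gives
\[
u^{\tau+1}_{\vec n}-2v^\tau_{\vec n}+u^{\tau-1}_{\vec n}
=\frac{2\delta^2|v^\tau_{\vec n}|^p}{2-\delta^2 v^\tau_{\vec n}|v^\tau_{\vec n}|^{p-2}},
\]
whose denominator lies in $(0,4)$ under the contradiction hypothesis, so the right-hand side is at least $\tfrac{1}{2}\delta^2|v^\tau_{\vec n}|^p$. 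Because $\hat M$ is a translation-invariant average, $\sum_{\vec n}v^\tau_{\vec n}=w^\tau$, and summation over $\vec n$ produces
\[
w^{\tau+1}-2w^\tau+w^{\tau-1}\;\ge\;\tfrac{\delta^2}{2}\sum_{\vec n}|v^\tau_{\vec n}|^p\;\ge\;0.
\]
Combined with (A2), this monotonicity in the increments yields $w^{\tau+1}-w^\tau\ge w^1-w^0>0$, hence at least linear growth $w^\tau\ge c\tau$ for large $\tau$.

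Next I exploit the finite propagation speed built into \eqref{dwave}: a single time step couples $u^{\tau+1}_{\vec n}$ only to $u^\tau_{\vec n\pm\vec e_k}$ and $u^{\tau-1}_{\vec n}$, so (A1) implies that $u^\tau_{\vec n}$, and therefore $v^\tau_{\vec n}$, vanishes outside $\{\|\vec n\|\le K+\tau+1\}$, a set of cardinality $O((\tau+K)^d)$. H\"older's inequality applied to the identity $w^\tau=\sum_{\vec n}v^\tau_{\vec n}$ then gives
\[
\sum_{\vec n}|v^\tau_{\vec n}|^p\;\ge\;\frac{(w^\tau)^p}{C(\tau+K)^{d(p-1)}},
\]
and substitution into the previous display produces the key discrete difference inequality
\[
w^{\tau+1}-2w^\tau+w^{\tau-1}\;\ge\;\frac{A\,(w^\tau)^p}{(\tau+K)^{d(p-1)}}
\]
valid for every $\tau$ large enough that $w^\tau>0$.

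The contradiction is then extracted by a Kato-style bootstrap. Plugging $w^\tau\ge c\tau$ into the right-hand side and summing twice in $\tau$ upgrades the lower bound to $w^\tau\gtrsim \tau^{\,p+2-d(p-1)}$, and iterating produces successively larger power laws: the iteration map $\beta\mapsto p\beta+2-d(p-1)$ has the single fixed point $\beta^{*}=d-2/(p-1)$ and the gap $\beta-\beta^{*}$ is multiplied by $p>1$ at each step. The condition $\beta^{*}<1$ is equivalent to $p(d-1)<d+1$, i.e.\ to $p<(d+1)/(d-1)$, so in the subcritical range the bootstrap starting from $\beta_{0}=1$ drives the exponent to infinity; once $w^\tau\gtrsim \tau^{\beta}$ with $\beta$ so large that $p\beta-d(p-1)$ dominates, the inequality reduces to a weight-free form $w^{\tau+1}-2w^\tau+w^{\tau-1}\ge c(w^\tau)^q$ with $q>1$, which forces $w^\tau\to\infty$ in finitely many steps exactly as in the ordinary difference equation \eqref{ode2}. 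When $d=1$ the support is bounded and the $\tau$-weight disappears, so any $p>1$ works immediately.

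The hard part will be the critical case $p=(d+1)/(d-1)$, where $\beta^{*}=1$ and the polynomial bootstrap stalls at linear growth. I anticipate that, as in the continuous proofs of Schaeffer and of Yordanov--Zhang, a logarithmically corrected ansatz $w^\tau\gtrsim \tau(\log\tau)^{\gamma}$ must be iterated, with discrete summation-by-parts identities playing the role of the Fubini/change-of-variable manipulations used in the continuous argument. A secondary technicality is that the H\"older step is only informative once $w^\tau>0$, which requires waiting $O(|w^0|/(w^1-w^0))$ time steps; this is handled by replacing $\tau$ with $\tau-\tau_{*}$ for a suitable shift $\tau_{*}$.
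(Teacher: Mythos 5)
Your proposal follows the same general Kato-style strategy as the paper (mass functional, finite propagation, Jensen/H\"older step, bootstrap), but it has genuine gaps. First, the contradiction hypothesis is misstated and this breaks your key pointwise estimate. Non-blow-up in the paper's sense only gives the one-sided bound $v^\tau_{\vec n}<(2\delta^{-2})^{1/(p-1)}$; there is no a priori lower bound on $v^\tau_{\vec n}$, so the denominator $2-\delta^2 v^\tau_{\vec n}|v^\tau_{\vec n}|^{p-2}$ lies in $(0,\infty)$, not $(0,4)$: for $v^\tau_{\vec n}<0$ it equals $2+\delta^2|v^\tau_{\vec n}|^{p-1}$ and can be arbitrarily large, in which case $\frac{2\delta^2|v|^p}{2+\delta^2|v|^{p-1}}\sim 2|v|$ is far smaller than $\tfrac12\delta^2|v|^p$. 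Hence your passage from the summed equation to $\sum_{\vec n}|v^\tau_{\vec n}|^p$, and the subsequent H\"older step, fails exactly when some $v^\tau_{\vec n}$ are large and negative (assuming $|v^\tau_{\vec n}|<(2\delta^{-2})^{1/(p-1)}$ instead is assuming more than the negation of blow-up). The paper circumvents this with a dedicated Jensen-type convexity lemma for $h(x)=2|x|^p/(1-x|x|^{p-2})$, applied to the whole sum to get $\sum_{\vec n}h(v^\tau_{\vec n})\ge T^\tau h(U^\tau/T^\tau)$; that inequality needs only the nonnegativity of the weighted mean, not a lower bound on each $v^\tau_{\vec n}$.

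Second, your endgame does not actually produce a contradiction, and the critical case is missing. A finite sum $w^\tau$ of finitely many real numbers can never ``reach $\infty$ in finitely many steps'': under the non-blow-up hypothesis every step of \eqref{dwave} is well defined, so superlinear difference inequalities only force rapid growth, never finite-time divergence (this is exactly why the naive discretization \eqref{dwave0} has no blow-up). The contradiction must come from the quantitative upper bound $w^\tau<(2\delta^{-2})^{1/(p-1)}\,\#\{\vec n:\|\vec n\|\le K+\tau\}=O(\tau^d)$, i.e.\ the paper's $U^\tau<T^\tau$, which you never invoke. Your power-law bootstrap could be redirected to contradict that $O(\tau^d)$ bound in the strictly subcritical range, but at $p=(d+1)/(d-1)$ your iteration stalls at the fixed point $\beta^{*}=1$ and you only ``anticipate'' a log-corrected argument, whereas the theorem includes the critical exponent. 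The paper handles the whole range $1<p\le(d+1)/(d-1)$ uniformly: from $d(p-1)\le p+1$ it gets $U^{\tau+1}-2U^\tau+U^{\tau-1}\ge C_2\tau^{-(p+1)}(U^\tau)^p$, upgrades linear growth to $U^\tau\ge C_1'\tau\log\tau$, shows the discrete energy $E^\tau=(U^{\tau+1}-U^\tau)^2-\frac{C_2}{p+1}\tau^{-(p+1)}(U^\tau)^{p+1}$ increases, and concludes $U^{\tau+1}-U^\tau\ge CU^\tau/\tau$ with $C$ arbitrarily large, hence $U^\tau\ge C'\tau^{d+1}$, contradicting $U^\tau<T^\tau=O(\tau^d)$. (Also, your $d=1$ remark is based on a false premise: the support still grows like $\tau$ there; that case works simply because $d(p-1)\le p+1$ holds for every $p>1$ when $d=1$.)
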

\begin{remark}
The summations in (A2) seem to be infinite series, but owing to (A1), both summations are finite series.\\
The author believes that \eqref{dwave} does keep the characteristic of the critical exponent $p_{\text{c}(d)}$.
\end{remark}
\section{Proof of the theorem}
The idea of the proof is similar to that adopted by Kato\cite{Kato}.

First, to make the equations simply we take the scaling $(2\delta^{-2})^{1/(p-1)}u^\tau_{\vec{n}} \to u^\tau_{\vec{n}}$ then \eqref{dwave} is changed to 
\begin{equation}\label{dwave2}
\displaystyle{u^{\tau+1}_{\vec{n}} + u^{\tau-1}_{\vec{n}} = \frac{2v^\tau_{\vec{n}}}{1-v^\tau_{\vec{n}}|v^\tau_{\vec{n}}|^{p-2}}}.
\end{equation}
We shall deduce a contradiction by assuming that $u^\tau_{\vec{n}}$ does not blow up at any finite time, i.e., $v^\tau_{\vec{n}}<1\ ({}^\forall(\tau,\vec{n}) \in \mathbb{Z}_{{}\ge0}\times\mathbb{Z}^d)$.

Put 
\begin{equation}\label{def:U}
U^\tau:=\sum_{\vec{n}}u^\tau_{\vec{n}}.
\end{equation}
Because of (A1), $\{\vec{n} \in \mathbb{Z}^d;u^\tau_{\vec{n}} \neq 0\} \subset \{\vec{n} \in \mathbb{Z}^d;\|\vec{n}\|\le K+\tau-1\}$ so that the summation of \eqref{def:U} is well-defined.
Moreover, from $\{\vec{n} \in \mathbb{Z}^d;v^\tau_{\vec{n}} \neq 0\} \subset \{\vec{n} \in \mathbb{Z}^d;\|\vec{n}\|\le K+\tau\}$, $U^\tau=\sum_{\vec{n}}{v^\tau_{\vec{n}}}$ and $v^\tau_{\vec{n}}<1$, we obtain the inequality as follow
\begin{equation}\label{inequality}
U^\tau < T^\tau,
\end{equation}
where $T^\tau:=\#\{\vec{n} \in \mathbb{Z}^d;\|\vec{n}\|\le K+\tau\}$.
From \eqref{dwave2}, we get
\begin{equation}\label{sum}
\displaystyle{\sum_{\vec{n}}{(u^{\tau+1}_{\vec{n}} - 2v^{\tau}_{\vec{n}} + u^{\tau-1}_{\vec{n}})} = \sum_{\vec{n}}{\frac{2|v^\tau_{\vec{n}}|}{1-v^\tau_{\vec{n}}|v^\tau_{\vec{n}}|^{p-2}}}}
\end{equation}
The left hand of \eqref{sum} is same to $U^{\tau+1}-2U^\tau+U^{\tau-1}$ and the right hand is nonnegative because that all terms of summation are nonnegative.\\
Hence we get $U^{\tau+1}-2U^\tau+U^{\tau-1} \ge 0$.
From this inequality, there exists some positive number $C_0$ which satisfies the inequality as follow
\begin{equation}\label{ineq2}
U^\tau \ge C_0 \tau
\end{equation}
for sufficiently large $\tau \in \mathbb{Z}_{{}>0}$.\\
Note that $U^\tau \ge 0$ for sufficiently large $\tau \in \mathbb{Z}_{{}>0}$.

To get another inequality about $U^\tau$, we need the next lemma.
\begin{lemma}
Put 
\begin{equation*}
\displaystyle{h(x)=\frac{2|x|^p}{1-x|x|^{p-2}}\ (x<1)}.
\end{equation*}
Let $0 \le x_0 < 1,\ x_{j-1} \le x_j\ (j=1,\cdots,s)$ and $\lambda_j \ge 0\ (j=0,\cdots,s),\ \lambda_0+\cdots+\lambda_s=1$.\\
If $\lambda_0x_0+\cdots+\lambda_sx_s \ge 0$ then the inequality as follow
\begin{equation*}
\lambda_0h(x_0)+\cdots+\lambda_sh(x_s) \ge h(\lambda_0x_0+\cdots+\lambda_sx_s)
\end{equation*}
is satisfied.
\end{lemma}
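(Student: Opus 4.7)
The plan is to interpret this as Jensen's inequality for a convex function. Under the given hypotheses, every $x_j$ lies in $[0,1)$ (the lower bound $x_j \ge x_0 \ge 0$ comes from the monotonicity assumption, and each $x_j<1$ is implicit in the requirement that $h(x_j)$ be defined), so the convex combination $\bar x := \lambda_0 x_0 + \cdots + \lambda_s x_s$ also lies in $[0,1)$. Thus the hypothesis $\bar x \ge 0$ is actually automatic in this setup, and the claim reduces to the statement that $h$ is convex on $[0,1)$.

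To prove convexity on $[0,1)$, I would expand the denominator as a geometric series. For $x \in [0,1)$,
$$h(x) = \frac{2 x^p}{1-x^{p-1}} = 2\sum_{k=0}^\infty x^{p+k(p-1)}.$$
Every exponent $p+k(p-1)$ is $\ge p > 1$, and any monomial $x^\alpha$ with $\alpha \ge 1$ is convex on $[0,\infty)$. A non-negative linear combination of convex functions is convex, so $h$ is convex on $[0,1)$. (Alternatively, one could verify $h''(x) \ge 0$ on $[0,1)$ by direct computation, but the series argument is cleaner and avoids any bookkeeping with the derivative.)

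With $h$ convex on $[0,1)$ and $\bar x \in [0,1)$, the inequality claimed in the lemma is exactly Jensen's inequality applied to $h$ with weights $\lambda_j$ at points $x_j$. I do not anticipate a serious obstacle here; the only point worth underlining is that the hypothesis $x_0 \ge 0$ cannot be dropped, since $h$ is \emph{not} convex on the whole of $(-\infty,1)$ — for instance, a short computation for $p=3$ shows $h''(x)<0$ when $x<-\sqrt{3}$ — so restricting to the non-negative range is essential for the Jensen step to go through.
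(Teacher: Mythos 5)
Your reading of the hypotheses defuses the lemma, and this is where the gap lies. Taken literally the statement does say $x_{j-1}\le x_j$ with $0\le x_0$, but this is a typo: the paper's own argument works with the chain $0\le \lambda_0x_0+\cdots+\lambda_sx_s\le x_0<1$, i.e.\ $x_0$ is the \emph{largest} of the points, and the lemma is invoked immediately afterwards for the family $\{v^\tau_{\vec{n}}\}$ with equal weights $1/T^\tau$, where the only available facts are $v^\tau_{\vec{n}}<1$ and $U^\tau/T^\tau\ge0$; nothing prevents many of the $v^\tau_{\vec{n}}$ from being negative. So the statement that is actually needed is: $x_s\le\cdots\le x_1\le x_0$ with $0\le x_0<1$, the $x_j$ for $j\ge1$ possibly negative, and only the \emph{mean} assumed nonnegative. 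Under your reading the hypothesis $\lambda_0x_0+\cdots+\lambda_sx_s\ge0$ becomes vacuous — you noticed this yourself, and that redundancy is the sign that the interpretation is off. Your Jensen argument therefore only proves the sub-case in which every $x_j$ lies in $[0,1)$, and, as your own computation ($h''<0$ for $p=3$, $x<-\sqrt{3}$) shows, it cannot be applied verbatim in the real case, because the points may lie where $h$ fails to be convex.

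What is missing is the mechanism that handles the negative points. Fix $x_1,\dots,x_s$ and regard $F(x_0):=\lambda_0h(x_0)+\cdots+\lambda_sh(x_s)-h(\lambda_0x_0+\cdots+\lambda_sx_s)$ as a function of $x_0$; then $\partial F/\partial x_0=\lambda_0\bigl(h'(x_0)-h'(\lambda_0x_0+\cdots+\lambda_sx_s)\bigr)$, which is nonnegative as long as $0\le\lambda_0x_0+\cdots+\lambda_sx_s\le x_0<1$, because $h'$ is increasing on $[0,1)$ — this is the one place where convexity of $h$ on $[0,1)$ enters, and your geometric-series proof of it, $h(x)=2\sum_{k\ge0}x^{p+k(p-1)}$ with all exponents $\ge1$, is correct and in fact cleaner than the paper, which merely asserts convexity. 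One then slides $x_0$ down until the mean reaches $0$ and uses $h\ge0=h(0)$ to get $F(x_0)\ge\lambda_0h\bigl(-(\lambda_1x_1+\cdots+\lambda_sx_s)/\lambda_0\bigr)+\lambda_1h(x_1)+\cdots+\lambda_sh(x_s)-h(0)\ge0$. This reduction to the mean-zero case, not Jensen's inequality on $[0,1)$, is the substance of the lemma; without it your proposal does not cover the situation in which the lemma is actually used in the proof of the theorem.
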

\begin{proof}
We get
\begin{align}
\displaystyle{\frac{\partial}{\partial x_0}(\lambda_0h(x_0)+\cdots+\lambda_sh(x_s) - h(\lambda_0x_0+\cdots+\lambda_sx_s))}\notag\\
= \lambda_0 (h^\prime(x_0)-h(\lambda_0x_0+\cdots+\lambda_sx_s)),\label{lem}
\end{align}
where $\displaystyle{h^\prime(x):=\frac{d h}{d x}(x)}$.\\
Since $h(x)$ is convex on the interval $[0,1)$, $h^\prime(x)$ increases monotonically on the interval $[0,1)$.
On the other hand, $0 \le \lambda_0x_0+\cdots+\lambda_sx_s \le x_0 < 1$ by the definitions.\\
Then we get that \eqref{lem} is nonnegative and 
\begin{align*}
\lambda_0&h(x_0)+\cdots+\lambda_sh(x_s) - h(\lambda_0x_0+\cdots+\lambda_sx_s)\\
&\ge \lambda_0h(-(\lambda_1x_1+\cdots\lambda_sx_s)/\lambda_0)+\lambda_1h(x_1)+\cdots+\lambda_sh(x_s)-h(0)\\
&\ge 0
\end{align*}
is obtained.\\
Now the proof of lemma is completed.\ $\blacksquare$
\end{proof}
Since $\{\vec{n} \in \mathbb{Z}^d;v^\tau_{\vec{n}} \neq 0\} \subset \{\vec{n} \in \mathbb{Z}^d;\|\vec{n}\|\le K+\tau\}$ and $U^\tau=\sum_{\vec{n}}v^\tau_{\vec{n}}$ is nonnegative for sufficiently large $\tau\in\mathbb{Z}_{{}\ge0}$, this lemma is adopted to right hand of \eqref{sum} as follow,
\begin{eqnarray*}
\displaystyle{\frac{2|v^\tau_{\vec{n}}|^p}{1-v^\tau_{\vec{n}}|v^\tau_{\vec{n}}|^{p-2}}}&\ge& \displaystyle{T^\tau\frac{2|\frac{1}{T^\tau}\sum_{\vec{n}}v^{\tau}_{\vec{n}}|^p}{1-\frac{1}{T^\tau}\sum_{\vec{n}}v^{\tau}_{\vec{n}}|\frac{1}{T^\tau}\sum_{\vec{n}}v^{\tau}_{\vec{n}}|^{p-2}}}\\
&=& \displaystyle{\frac{2(T^\tau)^{1-p}(U^{\tau})^p}{1-(T^\tau)^{1-p}(U^{\tau})^{p-1}}}.
\end{eqnarray*}
Here we put $\lambda_j=1/T^\tau\ (j=1,\cdots,T^\tau)$.\\
We note that there exists positive number $C_T$ which satisfies $T^\tau < C_T\tau^d$ for sufficiently large $\tau\in\mathbb{Z}_{{}>0}$.
Considering this statement and $U^\tau<T^\tau$, we get
\begin{equation*}
U^{\tau+1} - 2U^\tau + U^{\tau-1} \ge 2C_T^{1-p}\tau^{-d(p-1)}(U^\tau)^p
\end{equation*}
with sufficiently large $\tau\in\mathbb{Z}_{{}>0}$.\\
Since $1<p\le(d+1)/(d-1)$, i.e.,\ $-d(p-1) \ge -(p+1)$, we get
\begin{equation}\label{ineq3}
U^{\tau+1} - 2U^\tau + U^{\tau-1} \ge C_2\tau^{-(p+1)}(U^\tau)^p,
\end{equation}
where $C_2:=2C_T^{1-p}$.\\
Moreover, using \eqref{ineq},
\begin{equation*}
U^{\tau+1} - 2U^\tau + U^{\tau-1} \ge C_2C_0^{1-p}\tau^{-1}
\end{equation*}
 with sufficiently large $\tau\in\mathbb{Z}_{{}>0}$.\\
Solving this difference inequality, it is found that $U^\tau$ increases monotonically and there exists some positive number $C_1^\prime$ which satisfies inequality
\begin{equation}\label{ineq4}
U^\tau \ge C_1^\prime\tau\log{\tau},
\end{equation}
with sufficiently large $\tau\in\mathbb{Z}_{{}>0}$.\\
Now we consider about
\begin{equation*}
\displaystyle{E^\tau := (U^{\tau+1}-U^\tau)^2-\frac{C_2}{p+1}\tau^{-(p+1)}(U^\tau)^{p+1}}.
\end{equation*}
Since \eqref{ineq3} and $U^\tau$ is monotonically increasing, we get
\begin{align*}
E^{\tau+1}&-E^\tau \\
&=\{(U^{\tau+1}-U^{\tau})^2-(U^{\tau}-U^{\tau-1})^2\}\\
&~~~~~-\frac{C_2}{p+1}\{\tau^{-(p+1)}(U^{\tau})^{p+1} - (\tau-1)^{-(p+1)}(U^{\tau-1})^{p+1}\}\\
&\ge2\tau^{-(p+1)}(U^{\tau})^p(U^{\tau+1}-U^{\tau-1}) - \frac{C_2}{p+1}\tau^{-(p+1)}\{(U^{\tau})^{p+1}-(U^{\tau-1})^{p+1}\}\\
&\ge C_2\tau^{-(p+1)}(U^{\tau})^{p+1}\left\{1-\frac{U^{\tau-1}}{U^{\tau}}-\frac{1}{p+1}+\frac{1}{p+1}\left(\frac{U^{\tau-1}}{U^{\tau}}\right)^{p+1}\right\}.
\end{align*}
It is known that $\frac{1}{p+1}\lambda^{p+1}-\lambda+1-\frac{1}{p+1}>0\ (0\le\lambda\le1)$ so that we get $E^{\tau+1}-E^\tau>0$ with sufficiently large $\tau\in\mathbb{Z}_{{}>0}$.\\
Due to $U^\tau/\tau \ge C_1^\prime\log{\tau}$ by \eqref{ineq4}, there exists some positive number $C_3$ which satisfies
\begin{equation*}
\displaystyle{(U^{\tau+1}-U^\tau)^2 \ge C_3\tau^{-(p+1)}(U^\tau)^{p+1}}
\end{equation*}
with sufficiently large $\tau\in\mathbb{Z}_{{}>0}$.\\
Owing to \eqref{ineq4}, we get
\begin{align*}
U^{\tau+1} - U^\tau &\ge \displaystyle{C_3 \left(\frac{U^\tau}{\tau}\right)^{(p-1)/2}\frac{U^\tau}{\tau}}\\
&\ge \displaystyle{C_3C_1^{\prime(p-1)/2}(\log{\tau})^{(p-1)/2}\frac{U^\tau}{\tau}},
\end{align*}
with sufficiently large $\tau\in\mathbb{Z}_{{}>0}$.\\
Since $(\log{\tau})^{(p-1)/2}$ is arbitrarily large for large $\tau\in\mathbb{Z}_{{}>0}$, the following linear difference inequality
\begin{equation*}
\displaystyle{U^{\tau+1}-U^\tau \ge C \frac{U^\tau}{\tau}}
\end{equation*}
with any positive number $C$ and $\tau \ge {}^{\exists}\tau_0$ where $\tau_0$ depends on $C$ is held.\\
Solving this difference inequality, we get
\begin{equation*}
\displaystyle{U^\tau\ge\prod_{s=\tau_0}^{\tau-1}{\frac{s+C}{s}U^{\tau_0}}}\ (\tau\ge\tau_0+1).
\end{equation*}
Let $C>d+1$, then
\begin{equation*}
\displaystyle{U^\tau \ge U^{\tau_0}\prod_{k=0}^d{\frac{\tau+k}{\tau_0+k}}\ (\tau \ge \tau_0+1)}.
\end{equation*}
This inequality means that there exists some positive number $C^\prime$ which satisfies inequality $U^\tau \ge C^\prime\tau^{d+1}$ with sufficiently large $\tau\in\mathbb{Z}_{{}>0}$ but this statement contradicts to $U^\tau<T^\tau$.\\
Now the contradiction is deduced and the proof of the theorem is completed.
\section*{Acknowledgement}
I am deeply grateful to Prof. Tokihiro who provided helpful comments and suggestions. 

\end{document}